\documentclass[oneside,english]{amsart}
\usepackage[T1]{fontenc}
\usepackage[latin9]{inputenc}
\usepackage{float}
\usepackage{amsthm}
\usepackage{amssymb}
\usepackage{graphicx}
\usepackage{esint}

\makeatletter
\numberwithin{equation}{section}
\numberwithin{figure}{section}
\theoremstyle{plain}
\newtheorem{thm}{\protect\theoremname}
  \theoremstyle{plain}
  \newtheorem{cor}[thm]{\protect\corollaryname}
  \theoremstyle{plain}
  \newtheorem{prop}[thm]{\protect\propositionname}

\makeatother

\usepackage{babel}
  \providecommand{\corollaryname}{Corollary}
  \providecommand{\propositionname}{Proposition}
\providecommand{\theoremname}{Theorem}

\begin{document}

\title{Approximating $\overline{z}$ in the Bergman Space}

\author{Matthew Fleeman and Dmitry Khavinson}
\begin{abstract}
We consider the problem of finding the best approximation to $\bar{z}$
in the Bergman Space $A^{2}(\Omega)$. We show that this best approximation
is the derivative of the solution to the Dirichlet problem on $\partial\Omega$
with data $\left|z\right|^{2}$ and give examples of domains where
the best approximation is a polynomial, or a rational function. Finally,
we obtain the ``isoperimetric sandwich'' for $dist(\overline{z},\Omega)$
that yields the celebrated St. Venant inequality for torsional rigidity. 
\end{abstract}
\maketitle

\section{Introduction}

Let $\Omega$ be a bounded domain in $\mathbb{C}$ with boundary $\Gamma$.
Recall that the Bergman space $A^{2}(\Omega)$ is defined by:
\[
A^{2}(\Omega):=\{f\in Hol(\Omega):\left\Vert f\right\Vert _{A^{2}(\Omega)}^{2}=\int_{\Omega}\vert f(z)\vert^{2}dA(z)<\infty\}.
\]
In \cite{GuadKhav} the authors studied the question of ``how far''
$\bar{z}$ is from $A^{2}(\Omega)$ in the $L^{2}(\Omega)$-norm.
They showed that the best approximation to $\bar{z}$ in this setting
is 0 if and only if $\Omega$ is a disk, and that the best approximation
is $\frac{c}{z}$ if and only if $\Omega$ is an annulus centered
at the origin. In this note, we examine the question of what the best
approximation looks like in other domains. In section 2, we characterize
the best approximation to $\bar{z}$ as the derivative of the solution
to the Dirichlet problem on $\Gamma$ with data $\left|z\right|^{2}.$
This shows an interesting connection between the Dirichlet problem
and the Bergman projection. Recently in \cite{Legg}, A. Legg noted
independently another such connection via the Khavinson-Shapiro conjecture.
(Recall that the latter conjecture states that ellipsoids are the
only domains where the solution to the Dirichlet problem with polynomial
data is always a polynomial, cf. \cite{LundRend} and \cite{Render}.
In \cite[Proposition 2.1]{Legg}, the author showed that in the plane
this happens if and only if the Bergman projection maps polynomials
to polynomials). In section 3 we look at specific examples. In particular
we look at domains for which the best approximation is a monomial
$Cz^{k}$, some examples where the best approximation is a rational
function with simple poles, as well as examples where the best approximation
is a rational function with non-simple poles. In section 4, we prove
two isoperimetric inequalities, and obtain the St. Venant inequality.

\par\vspace{0.1in}

\thanks{\emph{Acknowledgment: }The final draft of this paper was produced
at the 2015 conference \textquotedbl{}Completeness problems, Carleson
measures and spaces of analytic functions\textquotedbl{} at the Mittag-Leffler
institute. The authors gratefully acknowledge the support and the
congenial atmosphere at the Mittag-Leffler institute. We would also
like to thank Jan-Fredrick Olsen for kindly pointing out to us his
results that led to the proof of Theorem \ref{thm:Iso-Per}.}

\section{Results}

The following theorem is the high ground for the problem.
\begin{thm}
\label{thm:BA}Let $\Omega$ be a bounded finitely connected domain.
Then $f(z)$ is the projection of $\overline{z}$ onto $A^{2}(\Omega)$
if and only if $\vert z\vert^{2}=F(z)+\overline{F(z)}$ on $\Gamma=\partial\Omega$,
where $F'(z)=f(z)$.
\end{thm}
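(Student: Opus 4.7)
The plan is to use the orthogonality characterization of the projection: $f\in A^{2}(\Omega)$ is the projection of $\bar z$ onto $A^{2}(\Omega)$ if and only if
\[
\int_{\Omega}(\bar z - f)\,\overline{g}\,dA = 0 \qquad\text{for every } g\in A^{2}(\Omega).
\]
The strategy is to recognize the integrand as a $\partial/\partial z$-derivative and then convert the area integral to a boundary integral by Green's theorem in its complex form. The key algebraic observation is that $\bar z=\partial_{z}(|z|^{2})$, that $f=\partial_{z}F$ by hypothesis, and that $\partial_{z}\overline{g}=0$, so
\[
(\bar z-f)\,\overline{g}\;=\;\frac{\partial}{\partial z}\bigl[(|z|^{2}-F(z))\,\overline{g(z)}\bigr].
\]
An application of Stokes in the form $\int_{\Omega}\partial_{z}u\,dA=\tfrac{i}{2}\oint_{\Gamma}u\,d\bar z$ would then yield
\[
\int_{\Omega}(\bar z-f)\,\overline{g}\,dA\;=\;\frac{i}{2}\oint_{\Gamma}(|z|^{2}-F(z))\,\overline{g(z)}\,d\bar z.
\]

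To avoid the complications caused by $F$ being multi-valued on multiply connected $\Omega$, I would work throughout with the single-valued harmonic function $v:=F+\overline{F}$ in place of $F$ itself; this is legitimate because the hypothesis $|z|^{2}=F+\overline{F}$ on $\Gamma$ forces the periods of $F$ around the boundary components to be purely imaginary. For the ``if'' direction, the hypothesis gives $v|_{\Gamma}=|z|^{2}$, so $h:=|z|^{2}-v$ vanishes on $\Gamma$. Since $\partial_{z}v=F'=f$, we still have $(\bar z-f)\overline{g}=\partial_{z}(h\overline{g})$, and the same Stokes calculation produces
\[
\int_{\Omega}(\bar z-f)\,\overline{g}\,dA \;=\; \frac{i}{2}\oint_{\Gamma}h\,\overline{g}\,d\bar z \;=\; 0,
\]
proving that $f$ is the projection of $\bar z$.

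For the ``only if'' direction I would invoke uniqueness of the projection. Solve the Dirichlet problem in $\Omega$ with boundary data $|z|^{2}$ to obtain a unique harmonic $v$, and set $\tilde f:=\partial_{z}v$, which is single-valued and holomorphic in $\Omega$ and lies in $A^{2}(\Omega)$ by the boundary regularity of $v$. Writing $v=F_{0}+\overline{F_{0}}$ locally with $F_{0}'=\tilde f$, the ``if'' direction (applied to $F_{0}$) shows that $\tilde f$ is the projection of $\bar z$; uniqueness then forces $f=\tilde f$, and choosing $F=F_{0}$ gives $|z|^{2}=F+\overline{F}$ on $\Gamma$ with $F'=f$.

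The main obstacle I expect is essentially the bookkeeping around possible multi-valuedness of $F$ on multiply connected domains; this is cleanly resolved by taking the single-valued real harmonic function $v=F+\overline{F}$ as the primary object. A secondary technical point is ensuring enough boundary regularity of the Dirichlet solution $v$ to justify Stokes' theorem and to guarantee $\partial_{z}v\in A^{2}(\Omega)$, which is standard for finitely connected domains with sufficiently regular boundary.
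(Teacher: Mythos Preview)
Your argument is correct, but it follows a genuinely different route from the paper's.

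The paper tests orthogonality only against the Cauchy kernels $g_{w}(\zeta)=1/(\overline{\zeta-w})$ with $w\in\hat{\mathbb{C}}\setminus\overline{\Omega}$, conjugates, and converts the resulting area integral to a boundary integral via Green's theorem, obtaining $\int_{\Gamma}\bigl(|\zeta|^{2}-\overline{F(\zeta)}\bigr)\frac{d\zeta}{\zeta-w}=0$ for all exterior $w$. The F.~and M.~Riesz theorem then forces $|\zeta|^{2}-\overline{F(\zeta)}$ to agree on $\Gamma$ with the boundary values of some holomorphic $h$, and a short reality argument gives $h=F$. The converse is run in reverse.

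Your route bypasses F.~and M.~Riesz entirely: for the ``if'' direction you test against arbitrary $g\in A^{2}(\Omega)$, use the identity $(\bar z-f)\,\overline{g}=\partial_{z}\bigl((|z|^{2}-v)\,\overline{g}\bigr)$ with the single-valued harmonic $v=2\operatorname{Re}F$, and finish by Stokes. For the ``only if'' direction you instead invoke solvability of the Dirichlet problem with data $|z|^{2}$ together with uniqueness of the Bergman projection. This is more elementary and makes the connection to the Dirichlet problem explicit from the outset; the paper's approach stays purely function-theoretic and does not need to appeal to Dirichlet solvability, at the price of the F.~and M.~Riesz theorem. Both arguments tacitly require the same boundary regularity (to justify Green/Stokes, boundary traces, and in your case $\partial_{z}v\in A^{2}(\Omega)$).
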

(Although $F$ can, in a multiply connected domain, be multivalued,
$Re(F)$ can be assumed to be single valued as a solution to the Dirichlet
problem with data $\left|z\right|^{2}$ on $\Gamma$.)
\begin{proof}
First suppose that $\overline{z}-f(z)$ is orthogonal to $A^{2}(\Omega)$
in $L^{2}(\Omega)$. Then for every $z\in\hat{\mathbb{C}}\backslash\overline{\Omega}$
we have that
\[
\int_{\Omega}(\overline{\zeta}-f(\zeta))\frac{1}{\overline{\zeta-z}}dA(\zeta)=0=\int_{\Omega}(\zeta-\overline{f(\zeta)})\frac{1}{\zeta-z}dA(\zeta).
\]
Then, by Green's Theorem, for any single valued branch of $F$, where
$F'=f$, we have that
\[
\int_{\Gamma}(\vert\zeta\vert^{2}-\overline{F(\zeta)})\frac{1}{\zeta-z}d\zeta=0.
\]
By the F. and M. Riesz Theorem, this happens if and only if we have
\[
\vert\zeta\vert^{2}-\overline{F(\zeta)}=h(\zeta)
\]
on $\Gamma$, where $h(\zeta)$ is analytic in $\Omega$.

Now, since $\vert\zeta\vert^{2}$ is real and we have that $\vert\zeta\vert^{2}=\overline{F(\zeta)}+h(\zeta)$
on $\Gamma$, then it must be that 
\[
\overline{F(\zeta)}+h(\zeta)=F(\zeta)+\overline{h(\zeta)},
\]
which implies that $h=F$.

Conversely, if $\vert\zeta\vert^{2}-\overline{F(\zeta)}=h(\zeta)$
on $\Gamma$ for some $h(\zeta)$ analytic in $\Omega$, then we have
that for all $z\in\hat{\mathbb{C}}\backslash\overline{\Omega}$,
\[
0=\int_{\Gamma}(\vert\zeta\vert^{2}-\overline{F(\zeta)})\frac{1}{\zeta-z}d\zeta
\]
\[
=\int_{\Omega}(\zeta-\overline{F'(\zeta)})\frac{1}{\zeta-z}dA(\zeta),
\]
and so we have that $\overline{\zeta}-F'(\zeta)$ is orthogonal to
$A^{2}(\Omega)$.

.
\end{proof}
This argument is similar to that of Khavinson and Stylianopoulos in
\cite{KhavStyl}. The following is an immediate corollary.
\begin{cor}
The best approximation to $\overline{z}$ in $A^{2}(\Omega)$ is a
polynomial if and only if the Dirichlet problem with data $\left|z\right|^{2}$
has a real-valued polynomial solution. Similarly, the best approximation
to $\overline{z}$ in $A^{2}(\Omega)$ is a rational function if and
only if the Dirichlet problem with data $\left|z\right|^{2}$ has
a solution which is the sum of a rational function and a finite linear
combination of logarithmic potentials of real point charges located
in the complement of $\Omega$.
\end{cor}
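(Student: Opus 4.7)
The plan is to translate both assertions through the ``iff'' of Theorem \ref{thm:BA}. By that theorem, the best approximation $f$ satisfies $|z|^2 = F(z) + \overline{F(z)}$ on $\Gamma$ with $F' = f$; hence (as the parenthetical remark following the theorem already notes) the real-valued harmonic function $u := F + \overline{F} = 2\operatorname{Re}(F)$ is the single-valued solution of the Dirichlet problem in $\Omega$ with data $|z|^2$. Everything thus reduces to comparing the form of $f$, of a primitive $F$, and of $u = 2\operatorname{Re}(F)$.

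For the polynomial case I would argue as follows. If $f$ is a polynomial, any antiderivative $F$ is a polynomial in $z$, so $u = F + \overline{F}$ is a real-valued polynomial in $z, \overline{z}$ (equivalently in $x, y$). Conversely, a real-valued harmonic polynomial $u$ solving the Dirichlet problem can be written as $2\operatorname{Re}(F)$ for some holomorphic polynomial $F$ (a standard fact about polynomial harmonic conjugates in $\mathbb{C}$), and then $f := F'$ is a polynomial best approximation by Theorem \ref{thm:BA}.

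For the rational case the main input is that an antiderivative of a rational function $f$ holomorphic in $\Omega$, with poles $z_k$ in $\mathbb{C} \setminus \Omega$ and residues $a_k$, has the form $F(z) = R(z) + \sum_k a_k \log(z - z_k)$ for some rational $R$. A direct calculation gives
\[
u = 2\operatorname{Re}(R(z)) + \sum_k \bigl(2\operatorname{Re}(a_k)\log|z - z_k| - 2\operatorname{Im}(a_k)\arg(z - z_k)\bigr).
\]
I would then invoke single-valuedness of $u$ on $\Omega$ to force the multi-valued $\arg(z - z_k)$ terms to drop out, i.e.\ $\operatorname{Im}(a_k) = 0$; this exhibits $u$ as a rational function in $z, \overline{z}$ plus a real linear combination of logarithmic potentials $\log|z - z_k|$ of real point charges located in $\mathbb{C} \setminus \Omega$. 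The converse reverses the computation: given $u$ of that form, take $F(z) = R(z) + \sum_k \alpha_k \log(z - z_k)$ with $\alpha_k \in \mathbb{R}$, observe that $f := F' = R'(z) + \sum_k \alpha_k/(z - z_k)$ is rational and single-valued in $\Omega$, and apply Theorem \ref{thm:BA} to identify it as the best approximation.

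The one delicate point, and the only place where the multiple connectivity of $\Omega$ really enters, is the single-valuedness step: it is precisely the reality of the residues (``real point charges'') that kills the periods picked up around the logarithmic singularities and allows $u$ to descend from the a priori multi-valued $F$ to a genuine single-valued solution of the Dirichlet problem.
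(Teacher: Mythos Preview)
Your approach---routing everything through Theorem~\ref{thm:BA} and the identification $u=2\operatorname{Re}(F)$---is exactly what the paper intends; the corollary is stated there as ``immediate'' with no proof given, so you are supplying the details the authors omit. The polynomial case is handled correctly.

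In the rational case, however, the step where you invoke single-valuedness of $u$ to force $\operatorname{Im}(a_k)=0$ does not work as stated. If $\Omega$ is simply connected, every pole $z_k$ lies in the unbounded complement and $\arg(z-z_k)$ already admits a single-valued branch on $\Omega$; single-valuedness of $u$ then imposes no constraint on the $\operatorname{Im}(a_k)$ whatsoever. If $\Omega$ is multiply connected, single-valuedness of $u=2\operatorname{Re}(F)$ only forces the period of $F$ around each bounded complementary component $K$ to be purely imaginary, i.e.\ $\sum_{z_k\in K}\operatorname{Im}(a_k)=0$, not that each individual residue is real. So the $\arg$ terms need not drop out, and the Dirichlet solution could in principle contain harmonic contributions of the form $\operatorname{Im}(a_k)\arg(z-z_k)$ that are neither rational in $x,y$ nor logarithmic potentials. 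The paper gives no argument at all here, so this is a genuine subtlety in the forward implication of the rational case rather than a deviation from the paper's method.
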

The following theorem, loosely speaking, shows that increasing the
connectivity of the domain essentially improves the approximation.
\begin{thm}
Let $\Omega$ be a finitely connected domain and let $f(z)$ be the
best approximation to $\overline{z}$ in $A^{2}(\Omega)$. Then $f$
must have at least one singularity in every bounded component of the
complement.\end{thm}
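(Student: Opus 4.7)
My plan is to argue by contradiction. Suppose, for some bounded component $K_j$ of $\mathbb{C}\setminus\Omega$, that $f$ has no singularity in $K_j$, so that $f$ extends holomorphically to a simply connected open neighborhood $V$ of $K_j$ with $V\setminus K_j\subset\Omega$. The first step is to upgrade this to a harmonic extension of $u=2\,\mathrm{Re}(F)$ (the Dirichlet solution from Theorem \ref{thm:BA}) across $\partial K_j$ into $K_j$. Since $V$ is simply connected, $f$ has a single-valued primitive $\tilde F$ on $V$. On the overlap $V\cap\Omega$, connected for a well-behaved $K_j$, both $u$ and $2\,\mathrm{Re}(\tilde F)$ are real parts of primitives of $f$ and hence differ by a real constant; absorbing that constant produces a single-valued harmonic extension of $u$ to all of $\Omega\cup V$.

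Next I would set $v:=u-|z|^2$ on $\Omega\cup V$. Theorem \ref{thm:BA} gives $v=0$ on $\Gamma$, and a direct computation yields $\Delta v=-4$ throughout $\Omega\cup V$. The maximum principle for the strictly superharmonic $v$ with vanishing boundary values forces $v>0$ in $\Omega$. The same argument now applies inside $\mathrm{int}(K_j)$, where $u$ has been harmonically extended and $v$ still vanishes on $\partial K_j$, yielding $v>0$ in $\mathrm{int}(K_j)$ as well. To close the argument I would invoke Green's theorem on $K_j$:
\[
-4\,\mathrm{Area}(K_j)\;=\;\int_{K_j}\Delta v\,dA\;=\;\int_{\partial K_j}\partial_{n}v\,ds,
\]
where $n$ is the outward unit normal from $K_j$ into $\Omega$. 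Since $v\ge 0$ in $\Omega$ and $v=0$ on $\partial K_j$, the right-hand side is nonnegative, while the left-hand side is strictly negative whenever $K_j$ has positive area. This is the sought contradiction.

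The step I expect to cause the most friction is the first one: reconciling the possibly multi-valued primitive of $f$ supplied by Theorem \ref{thm:BA} on $\Omega$ with the single-valued choice on $V$, so that $u$ genuinely extends harmonically across $\partial K_j$. This relies on the observation (made right after Theorem \ref{thm:BA}) that $\mathrm{Re}(F)$ may be taken single-valued on $\Omega$, together with the connectedness of $V\cap\Omega$ for a reasonable $K_j$. After that, the remainder of the proof is a clean maximum-principle and Green's-theorem computation, structurally identical to the torsional-rigidity picture that resurfaces in Section~4; degenerate components of zero area (isolated points or arcs), across which every $A^2(\Omega)$-function automatically extends, are implicitly excluded from the statement.
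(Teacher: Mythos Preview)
Your argument is correct and shares the paper's core idea: extend $u = 2\,\mathrm{Re}(F)$ harmonically across $K_j$ and exploit the strict superharmonicity of $v = u - |z|^2$. The paper's execution is shorter, however. Rather than applying the minimum principle separately on $\Omega$ and on $\mathrm{int}(K_j)$ and then closing with a Green's-identity flux computation, the paper applies the maximum principle \emph{once} on the filled domain $G = \Omega \cup K_j$: since $|z|^2 - 2\,\mathrm{Re}(F)$ is subharmonic in $G$, nonconstant, and vanishes on $\partial G$, it is strictly negative throughout $G$; but $\partial K_j$ lies in the interior of $G$, and the function must vanish there. This yields the contradiction in a single stroke, making your Green's-theorem step (and the separate treatment of $\mathrm{int}(K_j)$, which your flux argument never actually uses) unnecessary. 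On the other hand, your careful discussion of why $\mathrm{Re}(F)$ genuinely extends across $K_j$ addresses a point the paper passes over; it can be streamlined by noting that if $f$ extends holomorphically across $K_j$ then $\oint_\gamma f\,dz = 0$ for any loop $\gamma \subset \Omega$ encircling $K_j$, so $F$ itself---not just its real part---is single-valued near $K_j$ and extends, sidestepping the connectedness issue for $V \cap \Omega$.
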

\begin{proof}
Suppose $\partial\Omega=\Gamma=\cup_{i=1}^{n}\Gamma_{i}$ where $\Gamma_{i}$
is a Jordan curve for each $i$. By Theorem \ref{thm:BA}, we must
have that $\vert z\vert^{2}-2ReF=0$ on $\Gamma$ where $F'=f$. Suppose
that there is a bounded component $K$ of the complement of $\Omega$
such that $f$ is analytic in $G:=\Omega\cup K$. Without loss of
generality we will assume $\partial G=\cup_{i=1}^{n-1}\Gamma_{i}$.
Then $\vert z\vert^{2}-2ReF$ is subharmonic in $G$ and vanishes
on $\partial G$. However since $\vert z\vert^{2}-2ReF$ cannot be
constant in $G,$ it must be that $\vert z\vert^{2}-2ReF<0$ in $G$.
In particular it cannot vanish on $\Gamma_{n}$. 
\end{proof}
The following noteworthy corollary is now immediate.
\begin{cor}
\label{cor:PolySimpConn}If $\Omega$ is a finitely connected domain,
and the best approximation to $\overline{z}$ is a polynomial, then
$\Omega$ must be simply connected and $\partial\Omega$ is algebraic.
\end{cor}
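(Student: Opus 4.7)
The plan is to combine the preceding theorem with Theorem \ref{thm:BA} directly; both conclusions of the corollary come out with essentially no extra work.

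First I would handle simple connectedness. Assume the best approximation $f$ to $\overline{z}$ is a polynomial. Then $f$ is entire, so it has no singularities anywhere in $\mathbb{C}$, and in particular not in any bounded component of $\mathbb{C}\setminus\Omega$. The previous theorem says that $f$ must have at least one singularity in every such bounded component, so there cannot be any bounded components at all. For a finitely connected domain this forces $\Omega$ to be simply connected.

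Next I would show $\partial\Omega$ is algebraic. Since $\Omega$ is simply connected, any antiderivative $F$ of $f$ is single-valued on $\Omega$, and because $f$ is a polynomial we may take $F$ to be a polynomial in $z$ as well. Theorem \ref{thm:BA} then gives
\[
|z|^{2}=F(z)+\overline{F(z)}\quad \text{on } \Gamma.
\]
Writing $F(z)=\sum_{k}a_{k}z^{k}$, the relation becomes
\[
z\overline{z}-\sum_{k}a_{k}z^{k}-\sum_{k}\overline{a_{k}}\,\overline{z}^{\,k}=0\quad \text{on }\Gamma,
\]
which is a polynomial equation in the two real variables $\operatorname{Re}z,\operatorname{Im}z$. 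Hence $\Gamma$ lies inside a real algebraic curve, which is the desired conclusion.

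There is no real obstacle here; the only small point worth checking is that $F$ can genuinely be taken to be a polynomial (not merely a polynomial plus a multivalued logarithmic term), but this is immediate once simple connectedness has been established in the first step. So the two halves of the corollary are logically independent consequences of, respectively, the preceding theorem and Theorem \ref{thm:BA}, and the proof is essentially a two-line application of each.
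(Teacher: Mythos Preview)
Your proof is correct and is exactly the argument the paper has in mind: the corollary is stated as ``immediate'' from the preceding theorem (no bounded complementary components since a polynomial has no singularities) together with Theorem~\ref{thm:BA} (the relation $|z|^{2}=F(z)+\overline{F(z)}$ on $\Gamma$ with $F$ a polynomial gives a real polynomial equation for $\Gamma$). Your only unnecessary worry is about $F$ being a genuine polynomial---an antiderivative of a polynomial is automatically a polynomial, so simple connectedness is not even needed for that step.
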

The converse to Corollary \ref{cor:PolySimpConn} is false. In Section
3, we will give an example of a simply connected domain where the
best approximation to $\overline{z}$ is a rational function. Corollary
\ref{cor:PolySimpConn} implies that if the best approximation to
$\overline{z}$ is a polynomial then the boundary of $\Omega$, $\partial\Omega$,
possesses the Schwarz function (cf. \cite{Shapiro}). There is a connection
between the best approximation to $\overline{z}$ in $A^{2}(\Omega)$
and the Schwarz function of $\partial\Omega$. We record this connection
in the following proposition.
\begin{prop}
If $\Omega$ is a simply connected domain, and if the best approximation
to $\overline{z}$ is a polynomial of degree at least 1, then the
Schwarz function of $\Gamma=\partial\Omega$ cannot be meromorphic
in $\Omega$. Further, when the best approximation is a polynomial
the Schwarz function of the corresponding domain must have algebraic
singularities and no finite poles unless $\Omega$ is a disk.\end{prop}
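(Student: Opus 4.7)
The plan is to combine Theorem~\ref{thm:BA} with the Schwarz identity $\bar z = S(z)$ on $\Gamma$. Writing $F(z) = \sum_{k=0}^{d} c_k z^k$ and letting $\overline{F}$ denote the polynomial $\sum_{k=0}^{d} \bar c_k w^k$ obtained by conjugating the coefficients, Theorem~\ref{thm:BA} becomes, on $\Gamma$,
\[
zS(z) = F(z) + \overline{F}(S(z)).
\]
Because the best approximation is a polynomial, Corollary~\ref{cor:PolySimpConn} forces $\Gamma$ to be algebraic, so $S$ is an algebraic function and the displayed polynomial relation in $(z, S)$ persists along any branch. Under the (provisional) assumption that $S$ is meromorphic in $\Omega$, the identity principle pushes this identity from a neighborhood of $\Gamma$ into all of $\Omega$.

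Now assume $\deg f \geq 1$, so that $d := \deg F \geq 2$. Suppose for contradiction that $S$ is meromorphic in $\Omega$ with a pole of order $k \geq 1$ at some $z_0 \in \Omega$. Then $\overline{F}(S(z))$ has a pole of order $kd$ at $z_0$, while the remaining terms $zS(z)$ and $F(z)$ contribute poles of order at most $k$ and $0$, respectively. Since $kd > k$, this is impossible, so $S$ must be holomorphic throughout $\Omega$. But then $S$ is holomorphic on a neighborhood of $\overline{\Omega}$, and Stokes' identity combined with Cauchy's theorem gives
\[
\mathrm{Area}(\Omega) = \frac{1}{2i}\int_{\Gamma} \bar z\, dz = \frac{1}{2i}\int_{\Gamma} S(z)\, dz = 0,
\]
a contradiction. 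Hence $S$ cannot be meromorphic in $\Omega$, proving the first claim.

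For the second claim, the same pole-order analysis, valid whenever $d \geq 2$, shows that $S$ has no finite poles in $\Omega$; since $S$ is algebraic, any residual singularities of $S$ in $\Omega$ must be of branch-point (algebraic) type. There remains the degenerate case $\deg f < 1$, i.e., $f \equiv c$ constant, in which $F(z) = cz + C$ and the identity in Theorem~\ref{thm:BA} reads $|z|^2 = cz + \bar c\, \bar z + 2\,\mathrm{Re}(C)$ on $\Gamma$. This rearranges to $|z - \bar c|^2 = \text{const}$, so $\Omega$ is the exceptional disk, and indeed $S(z) = r^2/(z - \bar c) + c$ has its lone finite pole at the center.

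The main obstacle I foresee is the passage from the boundary identity to the identity in $\Omega$: one must argue that both sides, assumed meromorphic near $\Gamma$, agree on a set with an accumulation point and hence on the whole connected domain where both remain meromorphic; this is what then licenses the pole count. Everything else---the order estimate and the Cauchy--Stokes computation---is routine.
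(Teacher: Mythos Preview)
Your argument is correct and follows essentially the same route as the paper: rewrite Theorem~\ref{thm:BA} on $\Gamma$ as $zS(z)=F(z)+F^{\#}(S(z))$ (the paper writes $P$ and $P^{\#}$ in place of your $F$ and $\overline{F}$), extend by analytic continuation, and compare pole orders to force $\deg F\le 1$. The only substantive addition is your Stokes--Cauchy computation ruling out a pole-free meromorphic $S$; the paper handles this implicitly by invoking the equivalence between meromorphic Schwarz functions and rational conformal maps (quadrature domains), which guarantees at least one pole.
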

\begin{proof}
Suppose that $S(z)$ is the Schwarz function of $\Gamma=\partial\Omega$
and $p(z)$, a polynomial of degree $n-1$, is the best approximation
to $\overline{z}$ in $A^{2}(\Omega)$ with anti-derivative $P(z)$.
By Theorem \ref{thm:BA}, $zS(z)=P(z)+\overline{P(z)}=P(z)+P^{\#}(S(z))$
on $\Gamma$, where $P^{\#}(z)=\overline{P(\overline{z})}$. If $S$
has a pole of order $k$ at some $z_{0}\neq0$, then $zS(z)$ has
a pole of order $k$ at $z_{0}$ while $P^{\#}(S(z))$ has a pole
of order $nk$ at $z_{0}$. Thus $n\leq1$. If $z_{0}=0$, and $k\geq2$,
then the same argument applies. If $z_{0}=0$ and $k=1$, then $p$
is constant and $\Gamma$ is a circle. Since $S$ is meromorphic in
$\Omega$ if and only if the conformal map $\varphi:\mathbb{D}\rightarrow\Omega$
is a rational function, this shows that if $\Omega$ is a quadrature
domain which is not a disk, then the best approximation to $\overline{z}$
cannot be a polynomial (cf. \cite[pp.17-19]{Shapiro}).
\end{proof}
We now look at some examples illustrating the above results.

\section{Examples}

The following examples were generated using Maple by plotting the
boundary curve $\left|z\right|^{2}-1=Const\:\Re(F(z))$ where, by
Theorem \ref{thm:BA}, $f(z)=\frac{F'(z)}{2}$ is the best approximation
to $\overline{z}$ in $A^{2}(\Omega$), and $\Re(F(z))$ is the real
part of $F(z)$. Since $F$ is unique up to a constant of integration,
all such examples will be similar perturbations of a disk.

Note in the next few examples with best approximation $Cz^{k}$, the
associated domains have the $k+1$ fold symmetry inherited from the
$k$ fold symmetry of the best approximation.

\begin{figure}[H]
\includegraphics[scale=0.5]{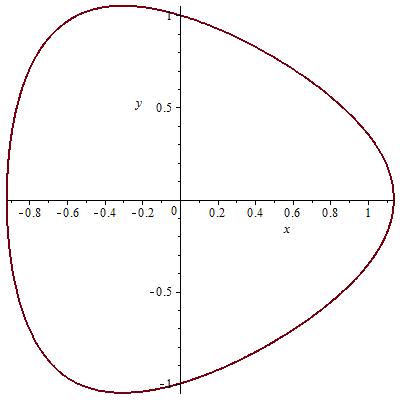}

\caption{}
\end{figure}

In Figure 3.1, the best approximation to $\overline{z}$ is $\frac{3z^{2}}{10}$.

\begin{figure}[H]
\includegraphics[scale=0.5]{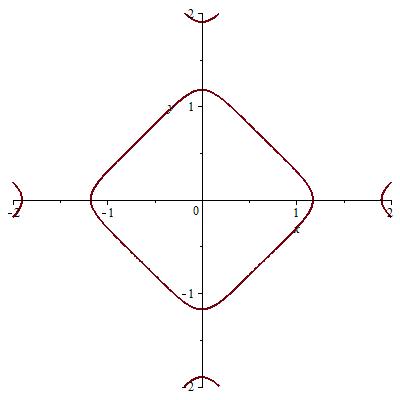}

\caption{}
\end{figure}

In Figure 3.2, the best approximation to $\overline{z}$ is $\frac{2z^{3}}{5}$.

\begin{figure}[H]
\includegraphics[scale=0.5]{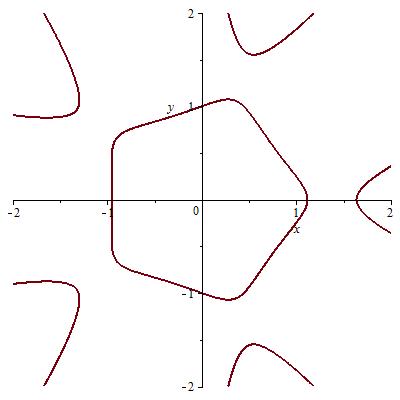}

\caption{}
\end{figure}

In Figure 3.3, the best approximation to $\overline{z}$ is $\frac{5z^{4}}{14}$.

The following example shows that the best approximation may be a rational
function even when the domain is simply connected. Thus while Corollary
\ref{cor:PolySimpConn} guarantees that $\Omega$ is simply connected
whenever the best approximation to $\overline{z}$ is an entire function,
the converse is not true.

\begin{figure}[H]
\includegraphics[scale=0.5]{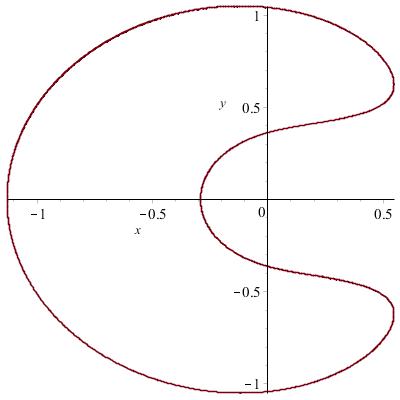}

\caption{}
\end{figure}

In this domain, the best approximation to $\overline{z}$ is $f(z)=\frac{1}{3z}+\frac{1}{5(z-\frac{1}{2})}.$

The constant(s) involved also play a strong role in the shape, and
even connectivity of the domain, as the following pictures shows.

\begin{figure}[H]
\includegraphics[scale=0.5]{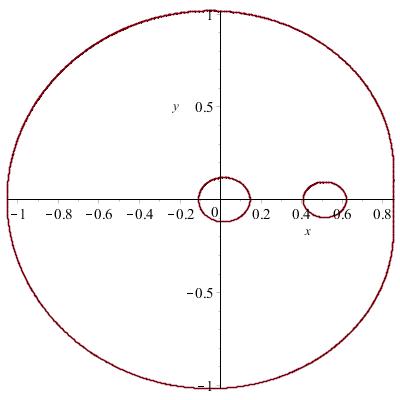}

\caption{}
\end{figure}

In Figure 3.5, the best approximation to $\overline{z}$ is $f(z)=\frac{1}{7z}+\frac{1}{10(z-\frac{1}{2})}.$

\begin{figure}[H]
\includegraphics[scale=0.5]{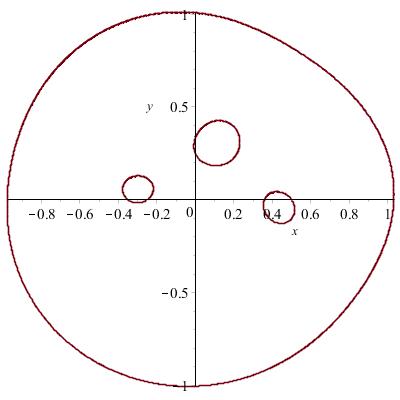}

\caption{}
\end{figure}

In Figure 3.6, the best approximation to $\overline{z}$ is $f(z)=-\frac{3z^{2}-2(\frac{1}{4}-\frac{1}{3}i)z-\frac{1}{8}+\frac{1}{12}i}{40(z-\frac{1}{2})^{2}(z-\frac{i}{3})^{2}(z+\frac{1}{4})^{2}}$.

\begin{figure}[H]
\includegraphics[scale=0.5]{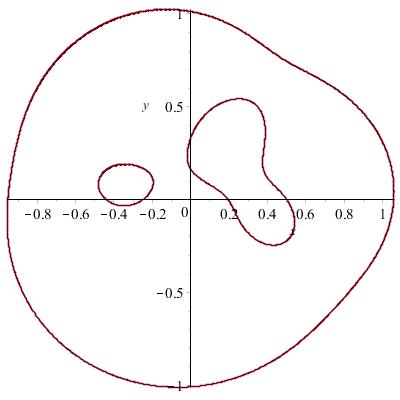}

\caption{}
\end{figure}

In Figure 3.7, the best approximation to $\overline{z}$ is $f(z)=-\frac{3z^{2}-2(\frac{1}{4}-\frac{1}{3}i)z-\frac{1}{8}+\frac{1}{12}i}{10(z-\frac{1}{2})^{2}(z-\frac{i}{3})^{2}(z+\frac{1}{4})^{2}}$.

\begin{figure}[H]
\includegraphics[scale=0.5]{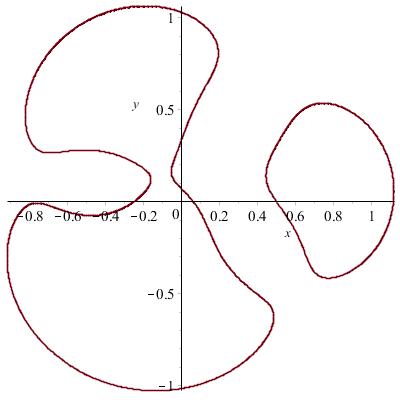}

\caption{}
\end{figure}

In Figure 3.8, the best approximation to $\overline{z}$ is $f(z)=-\frac{3z^{2}-2(\frac{1}{4}-\frac{1}{3}i)z-\frac{1}{8}+\frac{1}{12}i}{8(z-\frac{1}{2})^{2}(z-\frac{i}{3})^{2}(z+\frac{1}{4})^{2}}$.
(It should be noted that in all of the above examples, the poles lie
outside of $\overline{\Omega}$.)

As the order of the pole of the best approximation increases we see
$k-1$ symmetric loops separating the pole from the domain. (Here
$k$ is the order of the pole of the best approximation) .

\begin{figure}[H]
\includegraphics[scale=0.5]{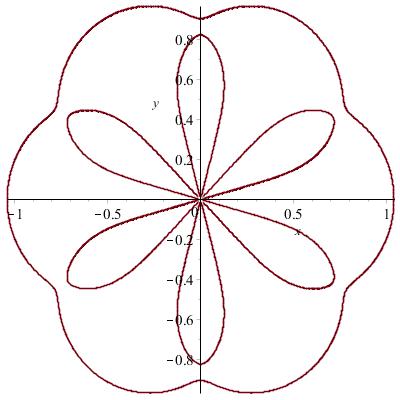}

\caption{}
\end{figure}

In Figure 3.9 the best approximation to $\overline{z}$ is $f(z)=\frac{-3}{10z^{7}}$.
(It should be noted that the loops do not pass through 0. So 0 does
not belong to $\overline{\Omega}$!)

\section{Bergman Analytic Content}

In \cite{GuadKhav} the authors expanded the notion of analytic content,
$\lambda(\Omega):=\inf_{f\in H^{\infty}(\Omega)}\left\Vert \overline{z}-f\right\Vert _{\infty}$
defined in \cite{BeneteauKhavinson} and \cite{Khav}, to Bergman
and Smirnov spaces context. The following ``isoperimetric sandwich''
goes back to \cite{Khav}:
\[
\frac{2A(\Omega)}{Per(\Omega)}\leq\lambda(\Omega)\le\sqrt{\frac{A(\Omega)}{\pi}},
\]
where $A(\Omega)$ is the area of $\Omega$, and $Per(\Omega)$ is
the perimeter of its boundary. Here the upper bound is due to Alexander
(cf. \cite{Alexander}), and the lower bound is due to D. Khavinson
(cf. \cite{BeneteauKhavinson} , \cite{GamelinKhavin}, and\cite{Khav}). 

Following \cite{GuadKhav}, we define $\lambda_{A^{2}}(\Omega):=\inf_{f\in A^{2}(\Omega)}\left\Vert \overline{z}-f\right\Vert _{2}$.
\begin{thm}
\label{thm:Iso-Per}If $\Omega$ is a simply connected domain with
a piecewise smooth boundary, then 
\[
\sqrt{\rho(\Omega)}\leq\lambda_{A^{2}}(\Omega)\leq\frac{Area(\Omega)}{\sqrt{2\pi}},
\]
where $\rho(\Omega)$ is the torsional rigidity of $\Omega$ (cf.
\cite[pg. 24]{PolyaSzego}). \end{thm}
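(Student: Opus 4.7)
The plan is to handle the two bounds of the sandwich by rather different arguments.

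\emph{Lower bound.} I would identify $\lambda_{A^{2}}(\Omega)^{2}$ directly with the Dirichlet integral of the Prandtl stress function. By Theorem~\ref{thm:BA} the best approximation is $f = F'$, where $F$ is single-valued (because $\Omega$ is simply connected) and satisfies $|z|^{2} = 2\,\mathrm{Re}(F)$ on $\Gamma$. Set $u := \mathrm{Re}(F) - |z|^{2}/2$. Then $u$ vanishes on $\Gamma$, and since $\mathrm{Re}(F)$ is harmonic while $\Delta(|z|^{2}) = 4$, one has $-\Delta u = 2$ in $\Omega$; thus $u$ is the Prandtl torsion function and $\rho(\Omega) = \int_{\Omega} |\nabla u|^{2}\,dA$. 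The key observation is that $2u_{z} = F' - \overline{z}$, so that $|\nabla u|^{2} = 4|u_{z}|^{2} = |\overline{z} - f|^{2}$, and integration yields $\rho(\Omega) = \lambda_{A^{2}}(\Omega)^{2}$. In particular the lower bound is actually an equality.

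\emph{Upper bound.} Here I would pass to the unit disk using the Riemann map $\varphi:\mathbb{D} \to \Omega$. A change of variable shows
\[
\lambda_{A^{2}}(\Omega)^{2} \;=\; \inf_{g \in A^{2}(\mathbb{D})}\int_{\mathbb{D}}\bigl|\overline{\varphi(w)}\,\varphi'(w) - g(w)\bigr|^{2}\,dA(w),
\]
so the task reduces to bounding the $L^{2}(\mathbb{D})$-distance of $\overline{\varphi}\,\varphi'$ from $A^{2}(\mathbb{D})$. Expanding $\varphi(w) = \sum_{k\ge 0}a_{k} w^{k}$, the area theorem gives $A(\Omega) = \pi \sum_{k\ge 1} k|a_{k}|^{2}$. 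I would compute the Bergman projection $P(\overline{\varphi}\,\varphi')$ explicitly in the $a_{k}$'s and reduce the required inequality to a sharp estimate on these coefficients; this coefficient estimate is precisely the isoperimetric-type input provided by Olsen's results, acknowledged in the paper. I expect this to be the main obstacle: the coefficient inequality is sharp (equality occurs when $\varphi(w) = Rw$, i.e., when $\Omega$ is a disk) and exploits the univalence of $\varphi$ in an essential way, so that a naive Cauchy--Schwarz applied to the Taylor series will not suffice.

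Combining the two bounds gives the sandwich. Squaring then yields, as a bonus, the Saint-Venant isoperimetric inequality $\rho(\Omega) \le A(\Omega)^{2}/(2\pi)$, because the lower bound is actually an equality; this is the route by which the authors extract a new proof of a classical inequality from a natural question about best approximation in $A^{2}(\Omega)$.
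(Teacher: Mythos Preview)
Your lower-bound argument is correct and in fact proves more than the paper does: by writing the best approximation as $f=F'$ via Theorem~\ref{thm:BA}, setting $u=\mathrm{Re}\,F-\tfrac12|z|^2$, and observing $2u_z=f-\bar z$, you obtain the \emph{equality} $\lambda_{A^2}(\Omega)^2=\int_\Omega|\nabla u|^2\,dA=\rho(\Omega)$. The paper reaches only the inequality $\lambda_{A^2}(\Omega)\ge\sqrt{\rho(\Omega)}$, and by a different route: it invokes duality together with Khavin's lemma to write $\lambda_{A^2}(\Omega)=\sup_{u\in W_0^{1,2}(\Omega)}\bigl|\int_\Omega u\,dA\bigr|/\|u_z\|_2$, and then simply tests this supremum with the stress function. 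Your approach is more direct, avoids Khavin's lemma, and makes the St.~Venant deduction a one-liner; the paper's duality approach, on the other hand, does not rely on Theorem~\ref{thm:BA} and would adapt verbatim to distances from $A^2(\Omega)$ of symbols other than $\bar z$.

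For the upper bound the two arguments agree in substance: both rest on the Olsen--Reguera estimate. The paper quotes it in the packaged form $\|[T_z^*,T_z]\|\le\mathrm{Area}(\Omega)/(2\pi)$ on $A^2(\Omega)$ and then specializes to the unit vector $g=\mathrm{Area}(\Omega)^{-1/2}$, which gives the bound in one line. Your plan---pull back by the Riemann map and bound the distance of $\overline{\varphi}\,\varphi'$ from $A^2(\mathbb D)$ via a coefficient inequality using univalence---is essentially how Olsen--Reguera prove that commutator bound in the first place, so you would be reproducing their argument rather than citing it. That is fine, but the paper's packaging is shorter; your change of variables is correct, and you are right that a naive Cauchy--Schwarz on the Taylor coefficients will not close the gap.
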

\begin{proof}
To see the lower bound, we note that by duality
\begin{equation}
\lambda_{A^{2}}(\Omega):=\inf_{f\in A^{2}(\Omega)}\left\Vert \overline{z}-f\right\Vert _{2}=\sup_{g\in(A^{2}(\Omega))^{\perp}}\left|\frac{1}{\left\Vert g\right\Vert _{2}}\int_{\Omega}\overline{zg}dA(z)\right|.\label{eq:4.1}
\end{equation}
By Khavin's lemma (cf. \cite{BellFergLund} ,\cite{GuadKhav} and
\cite{Shapiro}), we have that
\[
(A^{2}(\Omega))^{\perp}:=\{\frac{\partial u}{\partial z}|\: u\in W_{0}^{1,2}(\Omega)\},
\]
where $W_{0}^{1,2}(\Omega)$ is the standard Sobolev space of functions
with square-integrable gradients and vanishing boundary values. Thus,
integrating by parts, (\ref{eq:4.1}) can be written as 
\[
\lambda_{A^{2}(\Omega)}=\sup_{u\in W_{0}^{1,2}(\Omega)}\frac{1}{\left\Vert \frac{\partial u}{\partial z}\right\Vert _{2}}\left|\int_{\Omega}udA(z)\right|.
\]
Any particular choice of $u(z)$ will thus yield a lower bound. Suppose
we choose $u(z)$ to be the stress function satisfying
\[
\begin{cases}
\Delta u=-2\\
u|_{\partial\Omega}=0
\end{cases}
\]
 (cf. \cite{BellFergLund} and \cite{PolyaSzego}). Then, since $u(z)$
is real-valued, we have that $\left\Vert \frac{\partial u}{\partial z}\right\Vert _{2}=\frac{1}{2}\left\Vert \nabla u\right\Vert _{2}$
and 
\[
\frac{1}{\left\Vert \frac{\partial u}{\partial z}\right\Vert _{2}}\left|\int_{\Omega}udA(z)\right|=\frac{2\left|\int_{\Omega}udA(z)\right|}{\left\Vert \nabla u\right\Vert _{L^{2}(\Omega)}}=\sqrt{\rho(\Omega)},
\]
(cf. \cite{BellFergLund} and \cite{OlsenReg}). Thus, 
\begin{equation}
\lambda_{A^{2}}(\Omega)\geq\sqrt{\rho(\Omega)}.\label{eq:4.2}
\end{equation}

To prove the upper bound, observe that 
\[
\lambda_{A^{2}}^{2}(\Omega)=\left\Vert \overline{z}\right\Vert ^{2}-\left\Vert P(\overline{z})\right\Vert ^{2},
\]
where $P$ is the Bergman projection. Let $T_{z}$ be the Toeplitz
operator acting on $A^{2}(\Omega)$ with symbol $\varphi(z)=z$, and
let $[T_{z}^{*},T_{z}]=T_{z}^{*}T_{z}-T_{z}T_{z}^{*}$ be the self-commutator
of $T_{z}$. In \cite{OlsenReg}, it was proved that
\[
\left\Vert [T_{z}^{*},T_{z}]\right\Vert =\sup_{g\in A_{1}^{2}(\Omega)}(\left\Vert \overline{z}g\right\Vert ^{2}-\left\Vert P(\overline{z}g)\right\Vert ^{2})\leq\frac{Area(\Omega)}{2\pi},
\]
where $A_{1}^{2}(\Omega)=\{g\in A^{2}(\Omega):\;\left\Vert g\right\Vert _{2}=1\}$.
Taking $g=\frac{1}{\sqrt{Area(\Omega)}}$ yields 
\[
\frac{1}{Area(\Omega)}(\left\Vert \overline{z}\right\Vert ^{2}-\left\Vert P(\overline{z})\right\Vert ^{2})\leq\frac{Area(\Omega)}{2\pi},
\]
and the upper bound follows. 
\end{proof}
The celebrated St. Venant inequality (cf. \cite{PolyaSzego}) follows
immediately.
\begin{cor}
Let $\Omega$ be a simply connected domain. Then 
\[
\rho(\Omega)\leq\frac{Area^{2}(\Omega)}{2\pi}.
\]

\end{cor}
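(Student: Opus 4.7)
The plan is to chain together the two bounds of the ``isoperimetric sandwich'' established in Theorem \ref{thm:Iso-Per}. Specifically, since
\[
\sqrt{\rho(\Omega)} \leq \lambda_{A^{2}}(\Omega) \leq \frac{Area(\Omega)}{\sqrt{2\pi}},
\]
the lower and upper bounds can be compared directly to yield $\sqrt{\rho(\Omega)} \leq Area(\Omega)/\sqrt{2\pi}$, and squaring both sides (both are nonnegative) gives the claimed inequality $\rho(\Omega) \leq Area^{2}(\Omega)/(2\pi)$. The intermediate quantity $\lambda_{A^{2}}(\Omega)$ thus acts as a bridge between a purely geometric quantity (area) and an analytic/PDE quantity (torsional rigidity).

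The only technical wrinkle is that Theorem \ref{thm:Iso-Per} was stated under the hypothesis of a piecewise smooth boundary, while the corollary is asserted for an arbitrary simply connected domain. I would handle this by a standard exhaustion: approximate $\Omega$ from inside by an increasing sequence of subdomains $\Omega_{n} \subset \Omega$ with piecewise smooth (say, polygonal or analytic) boundaries such that $Area(\Omega_{n}) \to Area(\Omega)$. Since the torsional rigidity can be characterized variationally (as the supremum of $2(\int_{\Omega} v\, dA)^{2}/\int_{\Omega} |\nabla v|^{2}\, dA$ over $v \in W_{0}^{1,2}(\Omega)$, with $v$ extended by zero), monotonicity in the domain gives $\rho(\Omega_{n}) \leq \rho(\Omega)$ with $\rho(\Omega_{n}) \to \rho(\Omega)$. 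Applying the sandwich on each $\Omega_{n}$ and passing to the limit then yields the inequality in full generality.

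No step is a genuine obstacle: once Theorem \ref{thm:Iso-Per} is in hand, the corollary is essentially a one-line consequence. The mildest care is required in the limiting argument sketched above, but this is routine once the variational formulation of $\rho$ is invoked. I would therefore keep the proof to a couple of sentences, merely observing that squaring the isoperimetric sandwich produces the St.\ Venant inequality, with the density/approximation argument relegated to a parenthetical remark.
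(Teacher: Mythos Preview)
Your proposal is correct and matches the paper's approach exactly: the paper simply states that the St.\ Venant inequality ``follows immediately'' from the isoperimetric sandwich of Theorem~\ref{thm:Iso-Per}, i.e., by squaring $\sqrt{\rho(\Omega)}\le\lambda_{A^{2}}(\Omega)\le Area(\Omega)/\sqrt{2\pi}$. Your observation about the boundary-regularity discrepancy and the exhaustion argument to remove it is a point the paper does not address, so if anything your treatment is more careful.
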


\section{Concluding Remarks}

Recall that for all $u\in W_{0}^{1,2}(\Omega)$, we may write
\begin{equation}
\left|\int_{\Omega}u(z)dA(z)\right|=\left|\int_{\Omega}\frac{-1}{\pi}\int_{\Omega}\frac{\partial u}{\partial\overline{\zeta}}\frac{1}{\zeta-z}dA(\zeta)dA(z)\right|.\label{eq:5.1}
\end{equation}
Applying Fubini's Theorem and the Cauchy-Schwartz inequality, we find
that
\begin{equation}
\left|\int_{\Omega}u(z)dA(z)\right|\leq\left\Vert \frac{\partial u}{\partial\overline{z}}\right\Vert _{2}\left\Vert \frac{1}{\pi}\int_{\Omega}\frac{dA(z)}{z-\zeta}\right\Vert _{2}.\label{eq:5.2}
\end{equation}
In \cite{Dostanic1} and \cite{Dostanic2}, (also cf. \cite{AndersonKhavinsonLomon})
it was proved that the Cauchy integral operator $C:L^{2}(\Omega)\rightarrow L^{2}(\Omega)$,
defined by 
\[
Cf(z)=\frac{-1}{\pi}\int_{\Omega}\frac{f(\zeta)}{\zeta-z}dA(\zeta),
\]
 has norm $\frac{2}{\sqrt{\Lambda_{1}}}$ whenever $\Omega$ is a
simply connected domain with a piecewise smooth boundary, and $\Lambda_{1}$
is the smallest positive eigenvalue of the Dirichlet Laplacian,
\[
\begin{cases}
-\Delta u=\Lambda u\\
u|_{\partial\Omega}=0
\end{cases}.
\]
Further, by the Faber-Krahn inequality , cf. \cite[pp. 18, 98]{PolyaSzego}
and \cite[p. 104]{Bandle}, we have that 
\[
\frac{2}{\sqrt{\Lambda_{1}}}\leq\frac{2}{j_{0}}\sqrt{\frac{Area(\Omega)}{\pi}},
\]
where $j_{0}$ is the smallest positive zero of the Bessel function
$J_{0}(x)=\sum_{k=0}^{\infty}\frac{(-1)^{k}}{(k!)^{2}}(\frac{x}{k})^{2k}$.
Combining the above inequality with (\ref{eq:5.2}) we obtain 
\begin{equation}
\frac{1}{\left\Vert \frac{\partial u}{\partial\overline{z}}\right\Vert _{2}}\left|\int_{\Omega}udA(z)\right|\leq\frac{2}{j_{0}}\frac{Area(\Omega)}{\sqrt{\pi}}.\label{eq:5.3}
\end{equation}
This together with (\ref{eq:4.2}) and (\ref{eq:5.2}), yields an
isoperimetric inequality:
\[
\rho(\Omega)\leq\frac{4Area^{2}(\Omega)}{j_{0}^{2}\pi}.
\]
 However, this is a coarser upper bound than that found above since
$\frac{2}{j_{0}}\geq\frac{1}{\sqrt{2}}$. Since this upper bound depends
entirely on $\left\Vert \frac{1}{\pi}\int_{\Omega}\frac{dA(z)}{z-\zeta}\right\Vert _{2}$,
and since in the case when $\Omega$ is a disk $D$ we find that $\left\Vert \frac{1}{\pi}\int_{D}\frac{dA(z)}{z-\zeta}\right\Vert _{2}=\frac{Area(D)}{\sqrt{2\pi}}$,
we conjecture, in the spirit of the Ahlfors-Beurling inequality (cf.
\cite{AhlforsBeurling} and \cite{GamelinKhavin}), that 
\[
\left\Vert \frac{1}{\pi}\int_{\Omega}\frac{dA(z)}{z-\zeta}\right\Vert _{2}\leq\frac{Area(\Omega)}{\sqrt{2\pi}}.
\]
If true, this would provide an alternate proof to the upper bound
for Bergman analytic content, as well as a more direct proof of the
St. Venant inequality.

One is tempted to ask if any connection can be made between ``nice''
best approximations and the order of algebraic singularities of the
Schwarz function. For example when $\Omega$ is an ellipse, the Schwarz
function has square root singularities at the foci, and the best approximation
to $\overline{z}$ is a linear function. 

We would also like to find bounds on constants $C$ which guarantee
that the solution to the equation $\vert z\vert^{2}-1=C(z^{n}+\overline{z}^{n})$
is a curve which bounds a Jordan domain. This seems to depend on $n$. 

It would also be interesting to examine similar questions for the
Bergman space $A^{p}(\Omega)$ when $p\neq2$, as well as similar
questions for the best approximation of $\left|z\right|^{2}$ in $L_{h}^{2}(\Omega)$,
the closed subspace of functions harmonic in $\Omega$ and square
integrable with respect to area. However, it's not clear what the
analog of Theorem \ref{thm:BA} would be in this case. Mimicking the
proof of Theorem \ref{thm:BA} runs aground quickly.

\end{document}